\def\Im{{\rm Im}\,}
\newtheorem{thm}{Theorem}[section]
\newtheorem{prop}[thm]{Proposition}
 \newtheorem{rem}[thm]{Remark}
\newcommand{\C}{\mathbb{C}}
\newcommand{\R}{\mathbb{R}}
\renewcommand{\H}{\mathbb{H}}
\long\def\comment#1{{}}
 \title{On the entropy for indeterminate moment problems}
 \author{Christian Berg}
\begin{document}

 \maketitle

\begin{abstract}   For an indeterminate Hamburger moment problem we consider  an infinite family of analytic densities solving the moment problem and we  prove that they all have finite (Shannon) entropy. These densities are either all bounded or all unbounded. The result is illustrated by the Al-Salam--Carlitz moment problem, where all the densities in the family are bounded.
\end{abstract}

{\bf Mathematics Subject Classification}:  44A60;30D15.

{\bf Keywords} Indeterminate moment problems; entropy, maximum entropy; Al-Salam--Carlitz moment problem; Nevanlinna parametrization.

\section{Introduction and main results} For a probability density $f$ on an interval $I$ the quantity
\begin{equation}\label{eq:ent}
H[f]:=-\int_I f(x)\log(f(x))\,dx 
\end{equation} 
is called the (Shannon) entropy of $f$, cf. \cite{K:K}. It is possible to construct examples,  where the entropy is either $-\infty$ or $\infty$, but if the density has second order moments then the entropy cannot be $\infty$ by the maximum entropy approach, see e.g. \cite[p. 115]{M:T}.

A Hamburger moment sequence is a sequence of numbers $(m_k)_{k\ge 0}$ for which there exists a positive measure $\mu$ on the real line $\R$ with moments of any order satisfying
\begin{equation}\label{eq:mom}
m_k=\int x^k\,d\mu(x),\quad k=0,1,\ldots.
\end{equation}
Since we will only be dealing with probability measures $\mu$, we assume that the moment sequence starts with $m_0=1$.

The moment sequence $(m_k)_{k\ge 0}$ is called determinate if there is only one probability measure on $\R$ satisfying \eqref{eq:mom}, and it is called indeterminate if there are more than one such measure, and in this case the set of measures satisfying \eqref{eq:mom}
 is an infinite convex set $V=V(m_k)$, which is compact in the weak as well as the vague  topology coinciding on $V$, see \cite{Ak},\cite{Sch}. The set $V$ is described by the so-called Nevanlinna parametrization from 1922 and using this, it was proved in \cite{B:C}
that there are many measures in $V$ with a $C^\infty$ density with respect to Lebesgue measure, and also many discrete  as well as many continuous singular measures in $V$. Here many means that the subsets of these three classes of measures are dense in $V$.

Let us describe the Nevanlinna parametrization, one of the gems of the moment problem.
The parameter set consists of the set $\mathcal N$ of Pick functions augmented by  a point at infinity to $\mathcal N^*:=\mathcal N\cup\{\infty\}$. Pick functions also appear in the literature under the names of Nevanlinna functions or Herglotz functions, and they are holomorphic functions $\varphi:\H\to \C$ in the upper halfplane $\H:=\{z\in \C\mid \Im(z)>0\}$ satisfying $\Im \varphi(z)\ge 0$ for $z\in \H$. They are usually extended to the lower half-plane by the definition $\varphi(z)=\overline{\varphi(\overline{z})},\Im z<0$. In general a Pick function cannot be extended holomorphically across parts of the real axis.

The Nevanlinna parametrization of $V$ is a homeomorphism $\varphi\mapsto \mu_\varphi$ of $\mathcal N^*$ onto $V$ given by
\begin{equation}\label{eq:Nev}
\int \frac{d\mu_\varphi(x)}{x-z}=-\frac{A(z)\varphi(z)-C(z)}{B(z)\varphi(z)-D(z)},\quad z\in \C\setminus\R,
\end{equation}
where $A,B,C,D$ are entire holomorphic functions defined entirely in terms of the moments.
One defines first the sequence $(p_n)_{n\ge 0}$ of orthonormal polynomials, where $p_n$ is uniquely determined  as a polynomial of degree $n$ with positive leading coefficient together with the orthogonality
\begin{equation}\label{eq:orth}
\int p_n(x)p_m(x)\,d\mu(x)=\delta_{n,m},\quad \mu\in V.
\end{equation}
There is also a classical determinant formula expressing $p_n$  in terms of the moments, see \cite[formula (1.4)]{Ak}.  Note that the integrals in \eqref{eq:orth} have the same value for all the measures in $V$ since they have the same moments. Afterwards one  defines the polynomials of the second kind
\begin{equation}\label{eq:sec}
q_n(z)=\int \frac{p_n(z)-p_n(x)}{z-x}\,d\mu(x), \quad z\in\C.
\end{equation}
Here $q_0=0$ and $q_n$ is a polynomial of degree $n-1$ when $n\ge 1$. Again the value of the right-hand side of \eqref{eq:sec} is independent of $\mu\in V$.

Finally one defines the
Nevanlinna functions of the indeterminate moment problem:
\begin{eqnarray}
A(z)&=&z\sum_{k=0}^\infty q_k(z)q_k(0)\label{eq:A}\\
B(z)&=&-1+z\sum_{k=0}^\infty p_k(z)q_k(0) \label{eq:B}\\
C(z)&=&1+z\sum_{k=0}^\infty q_k(z)p_k(0)\label{eq:C}\\
D(z)&=&z\sum_{k=0}^\infty p_k(z)p_k(0)\label{eq:D}.
\end{eqnarray}
These series only make sense in the indeterminate case, where they converge uniformly on  compact subsets of the complex plane. They therefore define entire holomorphic functions, which are real-valued for real $z$. Furthermore, they have infinitely many  zeros which are all real. The following remarkable relation holds:
\begin{equation}\label{eq:det}
A(z)D(z)-B(z)C(z)=1,\quad z\in\C.
\end{equation}

If a sequence of indeterminate moment sequences $(m_{j,k})_{k\ge 0}, j=1,2,\ldots$ is given, and if it converges to an indeterminate moment sequence $(m_k)_{k\ge 0}$ as $j\to \infty$, i.e.,
$m_{j,k}\to m_k$ for each $k$ when $j\to\infty$, then one can prove that under reasonable assumptions  the solutions $\mu_{j,\varphi}$ converge weakly to $\mu_\varphi$ as $j\to\infty$ for each fixed Pick function $\varphi$ from the parameter set $\mathcal N^*$, see \cite[Proposition 2.4.1]{B:V}.

The following analytic densities are available for any indeterminate Hamburger moment problem defined in terms of the functions $B, D$, see \cite{B:C} p. 105:
\begin{equation}\label{eq:big}
f_{t+i\gamma}(x)=\frac{\gamma}{\pi}\left[(tB(x)-D(x))^2+\gamma^2 B(x)^2\right]^{-1}, \quad x\in \R,
\end{equation}
where $t+i\gamma\in \H$, and this density is the solution in $V$ corresponding to the constant Pick function $z\mapsto t+i\gamma$. The special case $t=0,\gamma=1$ gives the very simple expression
\begin{equation}\label{eq:i}
f(x)=\frac{1}{\pi}\left( B(x)^2+ D(x)^2\right)^{-1},\quad x\in\R.  
\end{equation}

The papers \cite{M:T}, \cite{N:T1}, \cite{N:T2}, \cite{N:T3}, \cite{N:T4}, \cite{S:T:N} have been concerned about the existence of a uniquely determined density $g_{hmax}\in V$ which maximizes  the entropy $H[f]$ among the measures in $V$ having a density $f$ with respect to Lebesgue measure. In the proofs it is not excluded that the maximum entropy $H[g_{hmax}]$ can be $-\infty$, but since our main Theorem~\ref{thm:main} shows the existence of densities with finite entropy, the option 
$H[g_{hmax}]=-\infty$ is not possible. It should be emphasized that there are many more
measures in $V$ with densities than the family \eqref{eq:big}.

The lognormal density is a classical example of an indeterminate probability measure. In the paper \cite{N:T3} it is claimed that the lognormal density itself realizes the maximum entropy.

We shall here prove that all the densities \eqref{eq:big} have  finite entropy by an argument independent of the maximum entropy approach. 

\begin{thm}\label{thm:main} For an arbitrary indeterminate Hamburger moment problem
the densities $f_{t+i\gamma}$ have entropy $H[f_{t+i\gamma}]\in\R$ and the function
$t+i\gamma\mapsto H[f_{t+i\gamma}]$ is continuous from $\H$ to $\R$.
\end{thm}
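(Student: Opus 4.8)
The plan is to estimate the entropy $H[f_{t+i\gamma}]$ from above and below using the explicit formula \eqref{eq:big}. Write $g_{t,\gamma}(x):=(tB(x)-D(x))^2+\gamma^2B(x)^2$, so that $f_{t+i\gamma}=\frac{\gamma}{\pi}g_{t,\gamma}^{-1}$ and
\begin{equation}\label{eq:entropy-split}
H[f_{t+i\gamma}]=-\log\frac{\gamma}{\pi}+\frac{\gamma}{\pi}\int_\R\frac{\log g_{t,\gamma}(x)}{g_{t,\gamma}(x)}\,dx.
\end{equation}
So everything reduces to showing that $x\mapsto \log g_{t,\gamma}(x)/g_{t,\gamma}(x)$ is integrable on $\R$, and that this integral depends continuously on $(t,\gamma)\in\H$. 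The upper bound on $H$ (i.e.\ ruling out $H=+\infty$) is immediate from the maximum-entropy remark in the introduction, since $f_{t+i\gamma}$ has moments of all orders; but I would in any case get it from the elementary inequality $\log u\le u$, which gives $f\log f\le f\cdot(\gamma/\pi)g_{t,\gamma}^{-1}$... more usefully, since $-u\log u\le e^{-1}$ for $u\ge 0$, the negative part of $f\log f$ contributes at most $e^{-1}|\mathrm{(support)}|$ on any bounded set, so the only real issue is the tails.

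The key structural input is the known growth of the Nevanlinna functions: $B$ and $D$ are entire of minimal exponential type, hence of order $\le 1$, and more relevantly, the density \eqref{eq:i} (the case $t=0,\gamma=1$) is itself a probability density, so $B^2+D^2\ge c>0$ on $\R$ and $(B^2+D^2)^{-1}\in L^1(\R)$. For general $(t,\gamma)$, since $g_{t,\gamma}$ is a positive-definite quadratic form in $(B(x),D(x))$ with coefficient matrix bounded above and below (in the Loewner order) by constants depending continuously on $(t,\gamma)$ with $\gamma>0$, there are constants $0<c_1(t,\gamma)\le c_2(t,\gamma)$, locally bounded on $\H$, with
\begin{equation}\label{eq:comparison}
c_1(t,\gamma)\bigl(B(x)^2+D(x)^2\bigr)\le g_{t,\gamma}(x)\le c_2(t,\gamma)\bigl(B(x)^2+D(x)^2\bigr),\quad x\in\R.
\end{equation}
In particular $g_{t,\gamma}$ is bounded below by a positive constant and $g_{t,\gamma}^{-1}\in L^1(\R)$ uniformly on compact subsets of $\H$. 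Then on the set where $g_{t,\gamma}(x)\ge e$ one has $0\le \log g_{t,\gamma}/g_{t,\gamma}\le 2g_{t,\gamma}^{-1/2}$ — not obviously integrable — so instead I would use, for any $\varepsilon>0$, $\log u\le C_\varepsilon u^\varepsilon$, giving $\log g_{t,\gamma}/g_{t,\gamma}\le C_\varepsilon g_{t,\gamma}^{\varepsilon-1}$. The point is that $(B^2+D^2)^{-1}$ decays like $|x|^{-2}$ in an averaged sense — more precisely, since $f$ in \eqref{eq:i} is a probability density with all moments finite, $(B^2+D^2)^{-1}=O(|x|^{-N})$ in an $L^1$-averaged/Ces\`aro sense for every $N$, but pointwise it only decays between consecutive zeros of $B^2+D^2$... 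Actually the clean fact is: $B^2+D^2$ has no real zeros and tends to $+\infty$ faster than any polynomial along $\R$ away from a sparse set; more robustly, $(B^2+D^2)^{-\alpha}\in L^1(\R)$ for every $\alpha>1/2$ because $B,D$ have no common real zeros and the pair $(B,D)$ parametrizes, via $\arctg(D/B)$, a function with derivative of constant sign and total variation finite on bounded sets but infinite overall — here is where I expect the technical work. Granting $(B^2+D^2)^{-\alpha}\in L^1$ for some $\alpha\in(1/2,1)$, choosing $\varepsilon=1-\alpha$ makes $\log g_{t,\gamma}/g_{t,\gamma}\in L^1$, with the $L^1$-norm locally bounded on $\H$.

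For continuity of $(t,\gamma)\mapsto H[f_{t+i\gamma}]$, I would apply dominated convergence to the integral in \eqref{eq:entropy-split}: on any compact $K\subset\H$ the comparison \eqref{eq:comparison} provides a single dominating function $C_K\,(B^2+D^2)^{\alpha-1}\in L^1(\R)$ for the integrands $\log g_{t,\gamma}/g_{t,\gamma}$ with $(t,\gamma)\in K$ (one must also dominate the negative part near the zeros of $D$ when $t=0$, but that is handled by the lower bound in \eqref{eq:comparison} and local compactness), while $(t,\gamma)\mapsto g_{t,\gamma}(x)$ is jointly continuous (indeed smooth) for each fixed $x$, hence so is $\log g_{t,\gamma}(x)/g_{t,\gamma}(x)$; continuity of the integral follows, and the prefactor $-\log(\gamma/\pi)$ is obviously continuous. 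The main obstacle, as flagged, is establishing $(B^2+D^2)^{-\alpha}\in L^1(\R)$ for some $\alpha<1$ — equivalently, controlling how slowly $B^2+D^2$ can grow along $\R$ near its minima; I would attack this via the theory of de Branges spaces, using that the phase function $\phi$ associated to the canonical system (with $E=B+iD$ or a rotation thereof a Hermite--Biehler function) satisfies $\phi'(x)=(B^2+D^2)^{-1}\cdot(\text{entire, positive})$ up to normalization, so that $\int_\R(B^2+D^2)^{-1}w(x)\,dx$ relates to $\int \phi'(x)\,dx/(\text{weight})$, which is finite because $\phi$ increases but the moment problem being indeterminate forces the right integrability — alternatively, cite the known fact from \cite{B:C} that \eqref{eq:i} has moments of all orders, which already gives $\int x^{2n}(B^2+D^2)^{-1}dx<\infty$, and interpolate with $\int(B^2+D^2)^{-1}dx<\infty$ via Hölder to deduce $(B^2+D^2)^{-\alpha}\in L^1$ for all $\alpha>1/2$ after splitting $\R=\{|x|\le R\}\cup\{|x|>R\}$ and using that on $\{|x|>R\}$, $(B^2+D^2)^{-1}\le R^{-2n}x^{2n}(B^2+D^2)^{-1}$ so $(B^2+D^2)^{-\alpha}\le$ (bounded) $\cdot x^{-2n\alpha}\cdot\bigl(x^{2n}(B^2+D^2)^{-1}\bigr)^{\alpha}$, and $\bigl(x^{2n}(B^2+D^2)^{-1}\bigr)^\alpha\in L^{1/\alpha}$ so by Hölder against $x^{-2n\alpha}\cdot 1\in L^{1/(1-\alpha)}$ when $2n\alpha/(1-\alpha)>1$, i.e.\ $n$ large, the product is in $L^1(\{|x|>R\})$. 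This last route is elementary and self-contained, so I would take it.
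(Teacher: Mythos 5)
Your argument has a genuine gap at the point you treat as obvious: the assertion that, because \eqref{eq:i} is a probability density, one has $B(x)^2+D(x)^2\ge c>0$ on $\R$. Integrability of $(B^2+D^2)^{-1}$ does not imply a positive lower bound; a positive lower bound is \emph{equivalent} to boundedness of the density \eqref{eq:i}, and the paper states explicitly (Remark 1.2) that it is not known whether the densities \eqref{eq:big} are always bounded — indeed Theorem 2.2 is devoted to proving exactly such a lower bound in the special Al-Salam--Carlitz case. Your comparison of $g_{t,\gamma}$ with $B^2+D^2$ (eigenvalue bounds for the quadratic form, locally uniform in $t+i\gamma$) is correct, but it only transfers the problem: everything you say about the negative part of $\log g_{t,\gamma}/g_{t,\gamma}$ — both the finiteness $H[f_{t+i\gamma}]>-\infty$ and the dominating function needed in the continuity step — rests on the unproved constant lower bound. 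Without it, nothing in your proposal excludes minima of $g_{t,\gamma}$ at large $|x|$ (points where $B$ and $D$ are simultaneously very small) decaying so fast that $\int_{\{g_{t,\gamma}<1\}}|\log g_{t,\gamma}|/g_{t,\gamma}\,dx=\infty$, i.e.\ $H=-\infty$; note also that the maximum-entropy remark you invoke only rules out $H=+\infty$, never $H=-\infty$.

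The missing idea is the paper's quantitative substitute for a constant lower bound: by \eqref{eq:det} one can write $\gamma=(tA(x)-C(x))B(x)\gamma-A(x)\gamma(tB(x)-D(x))$, and Cauchy--Schwarz gives $\gamma^2\le\left[(tA-C)^2+\gamma^2A^2\right]\left[(tB-D)^2+\gamma^2B^2\right]$; since $A,C$ (like $B,D$) are of minimal exponential type by M.~Riesz's theorem, this yields $g_{t,\gamma}(x)\ge \gamma^2 c^{-1}e^{-|x|}$, hence the two-sided bound $|\log g_{t,\gamma}(x)|\le L+|x|$, and integrability of $|\log g_{t,\gamma}|/g_{t,\gamma}$ follows because the density has finite first absolute moment. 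Your treatment of the positive part is fine and genuinely different from the paper's: the H\"older interpolation between $\int (B^2+D^2)^{-1}dx<\infty$ and $\int x^{2n}(B^2+D^2)^{-1}dx<\infty$ does give $(B^2+D^2)^{-\alpha}\in L^1$ for $\alpha\in(0,1)$, which bounds $\int_{\{g\ge1\}}\log g_{t,\gamma}/g_{t,\gamma}\,dx$ without invoking minimal exponential type; and once the two-sided estimate $|\log g_{t,\gamma}|\lesssim 1+|x|$ is available (locally uniformly in $t+i\gamma$), your dominated-convergence proof of continuity works and is arguably simpler than the paper's cutoff-plus-weak-convergence argument, since $(1+|x|)(B^2+D^2)^{-1}$ is a single integrable dominating function. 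But as written, the proof of $H[f_{t+i\gamma}]>-\infty$ and of the domination of the negative part is not established.
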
 
 
\begin{proof} An entire holomorphic function $f$ is said to be of minimal exponential type if for any $\varepsilon>0$ there exists a constant $K(\varepsilon)>0$ such that
\begin{equation}
|f(z)|\le K(\varepsilon)\exp(\varepsilon |z|),\quad z\in \C.
\end{equation}

A  theorem of Marcel Riesz, see \cite[Theorem 2.4.3]{Ak}, states that any of the four entire functions $A,B,C,D$ are of minimal exponential type. It follows that
$$
0<(tB(x)-D(x))^2+\gamma^2B(x)^2\le  c\exp(|x|), \quad x\in \R
$$
for a suitable constant $c>1$ depending on the functions $B,D$ and the constants $t,\gamma$. The first inequality holds because $B,D$ have no common zeros because of \eqref{eq:det}.
Similarly
$$
0<(tA(x)-C(x))^2+\gamma^2 A(x)^2\le  c\exp(|x|), \quad x\in \R,
$$
and it is certainly possible to use the same constant $c>1$ in both inequalities.
Note that by \eqref{eq:det}
$$
\gamma=(tA(x)-C(x))B(x)\gamma - A(x)\gamma(tB(x)-D(x)),
$$ 
so by the Cauchy-Schwarz inequality
$$
\gamma^2\le \left[(tA(x)-C(x))^2+\gamma^2A(x)^2\right]\left[(tB(x)-D(x))^2+\gamma^2B(x)^2\right].
$$ 
From this we get
\begin{eqnarray*}
\lefteqn{2\log\gamma - \log\left[(tA(x)-C(x))^2+\gamma^2A(x)^2\right]\le}\\ && \log\left[(tB(x)-D(x))^2+\gamma^2B(x)^2\right]\le \log c +|x|, 
\end{eqnarray*}
and hence
$$
-2|\log\gamma|-\log c-|x|\le
\log\left[(tB(x)-D(x))^2+\gamma^2B(x)^2\right]\le |x|+ \log c + 2|\log\gamma|
$$
showing that with $L:=\log c + 2 |\log\gamma|$ we have
\begin{equation}\label{eq:maj}
\left|\log\left[(tB(x)-D(x))^2+\gamma^2B(x)^2\right]\right|\le L+|x|.
\end{equation}
This shows that the entropy integral below is finite
\begin{eqnarray}\label{eq:entropy}
H[f_{t+i\gamma}]&=&\log(\pi/\gamma)+\frac{\gamma}{\pi}\int\frac{\log[(tB(x)-D(x))^2+\gamma^2B(x)^2]}{(tB(x)-D(x))^2+\gamma^2B(x)^2}dx,
\end{eqnarray}
as the density \eqref{eq:big} has moments of any order. 

It is also clear that  if $t+i\gamma$ belongs to a bounded subset $M$  of $\H$, then there exist constants $a,b>0$ such that
\begin{equation}\label{eq:maj2}
\left|\log\left[(tB(x)-D(x))^2+\gamma^2B(x)^2\right]\right|\le a+bx^2,\quad t+i\gamma\in M.
\end{equation} 

Let $(t_n+i\gamma_n)_{n\ge 1}$ denote a sequence from $\H$ converging to $t_0+i\gamma_0\in \H$. We shall prove that $H[f_{t_n+i\gamma_n}]$ converges to
$H[f_{t_0+i\gamma_0}]$.
For simplicity of writing we define
$$
h_n(x)=(t_nB(x)-D(x))^2+\gamma_n^2B(x)^2,\quad n=0,1,\ldots, x\in\R.
$$
We first note that since the Nevanlinna parametrization $\varphi\to \mu_\varphi$  is continuous, then we have weak convergence
\begin{equation}\label{eq:weak}
\lim_{n\to\infty} \int f_{t_n+i\gamma_n}(x)\,\psi(x)dx=
\int f_{t_0+i\gamma_0}(x)\psi(x)\,dx
\end{equation}
for any continuous and bounded function $\psi:\R\to\R$. By \eqref{eq:entropy} it suffices to prove that
\begin{equation}\label{eq:simply}
\lim_{n\to\infty}\int\frac{\log h_n(x)}{h_n(x)}dx=\int\frac{\log h_0(x)}{h_0(x)}dx.
\end{equation}
By \eqref{eq:maj2} there exist constants $a,b>0$ such that
\begin{equation}\label{eq:maj3}
|\log h_n(x)|\le a+bx^2,\quad n=0,1,\ldots.
\end{equation}
To a given $\varepsilon>0$ we choose a constant $K>0$ such that
\begin{equation}\label{eq:maj4}
\int_{\{|x|\ge K\}}\frac{a+bx^2}{h_0(x)}dx<\varepsilon,
\end{equation}
which is possible because the density $f_{t_0+i\gamma_0}$ has finite moments $m_k$ of all orders.
We next choose a continuous function $\varphi:\R\to [0,1]$, which is $1$ on the interval $[-K,K]$ and $0$ outside the interval $[-K-1,K+1]$. Letting $n\to\infty$ we get from the weak convergence \eqref{eq:weak}
\begin{eqnarray}\label{eq:cv}
\lefteqn{\int\frac{a+bx^2}{h_n(x)}(1-\varphi(x))\,dx=\frac{\pi}{\gamma_n}\left(a+bm_2
- \int f_{t_n+i\gamma_n}(x)(a+bx^2)\varphi(x)\,dx\right)} \nonumber\\
&\to& \frac{\pi}{\gamma_0}\left(a+bm_2-\int f_{t_0+i\gamma_0}(x)(a+bx^2)\varphi(x)\,dx\right)=\int\frac{a+bx^2}{h_0(x)}(1-\varphi(x))\,dx\nonumber\\
&\le& \int_{\{|x|\ge K\}}\frac{a+bx^2}{h_0(x)}dx<\varepsilon.
\end{eqnarray}
We finally have the following estimate involving three terms:
\begin{eqnarray*}
\left|\int\frac{\log h_0(x)}{h_0(x)}dx-\int\frac{\log h_n(x)}{h_n(x)}dx\right| \le
\left|\int_{\{|x|\ge K+1\}}\frac{\log h_0(x)}{h_0(x)}dx\right|\\+
\left|\int_{-K-1}^{K+1}\left(\frac{\log h_0(x)}{h_0(x)}-\frac{\log h_n(x)}{h_n(x)}\right)dx\right|
+\left|\int_{\{|x|\ge K+1\}}\frac{\log h_n(x)}{h_n(x)}dx\right|.
\end{eqnarray*}

The first term is $<\varepsilon$ by \eqref{eq:maj4}, the second is $<\varepsilon$ for $n$ sufficiently large, and the third is majorized by
$$
\int_{\{|x|\ge K+1\}}\frac{a+bx^2}{h_n(x)}dx\le \int\frac{a+bx^2}{h_n(x)}(1-\varphi(x))\,dx,
$$
which by \eqref{eq:cv} is $<\varepsilon$ for $n$ sufficiently large. This shows that
\eqref{eq:simply} holds.
\end{proof}

\begin{rem} {\rm It does not seem to be known if the densities \eqref{eq:big} are always  bounded or not. The unboundedness of the density \eqref{eq:i} can only happen if infinitely many large zeros of $B$ and $D$ are sufficiently close, but it seems difficult to construct such moment problems.

We show below that  all the densities \eqref{eq:big} are either bounded or they are all unbounded. A bounded density $f$ has necessarily entropy $H[f]>-\infty$ because if $f(x)\le C$ then
$$
H[f]=-\int \log(f(x)) f(x)\,dx\ge -\log C.
$$

In the next section we discuss the Al-Salam--Carlitz moment problem  and prove that all the densities \eqref{eq:big} are bounded for this moment problem.  }
\end{rem}

\begin{rem}{\rm
It is an open and interesting problem to find the Pick function $\varphi\in\mathcal N$ which corresponds to $g_{hmax}$ in the Nevanlinna parametrization. Even in the lognormal case this seems a difficult problem because the Stieltjes transform of the lognormal density
$$
E(z):=\frac{1}{\sqrt{2\pi}}\int_0^\infty \frac{\exp(-\log^2(x)/2)}{x(x-z)}dx,\quad z\in\H
$$
is not explicitly known.}
\end{rem}

\begin{prop}\label{thm:oneall} If one of the densities $f_{t+i\gamma}$ is bounded (resp. unbounded) then they are all bounded (resp. unbounded).
\end{prop}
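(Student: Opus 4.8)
The plan is to reduce the statement to an elementary comparison of positive definite quadratic forms. Write $w=t+i\gamma$ and put
$$
h_w(x):=(tB(x)-D(x))^2+\gamma^2B(x)^2=\bigl|B(x)w-D(x)\bigr|^2 ,
$$
so that $f_{t+i\gamma}=(\gamma/\pi)\,h_w^{-1}$ by \eqref{eq:big}. Because $B$ and $D$ have no common real zeros — a consequence of \eqref{eq:det}, already invoked in the proof of Theorem~\ref{thm:main} — one checks that $h_w(x)>0$ for every $x\in\R$ and every $w\in\H$: indeed $h_w(x)=0$ would force $B(x)=0$ (as $\gamma>0$) and then $D(x)=0$. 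Hence each $h_w$ is continuous and strictly positive on $\R$, and $f_{t+i\gamma}$ is bounded if and only if $\inf_{x\in\R}h_w(x)>0$. The task is therefore to show that $\inf_x h_{w_0}(x)>0$ for one $w_0\in\H$ forces $\inf_x h_w(x)>0$ for every $w\in\H$.

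The key observation is that $h_w(x)=Q_w(B(x),D(x))$, where $Q_w(u,v):=(tu-v)^2+\gamma^2u^2$ is, for each $w=t+i\gamma\in\H$, a positive definite quadratic form on $\R^2$ — positive definiteness uses precisely $\gamma>0$. Any two positive definite quadratic forms on a finite dimensional space are mutually comparable: restricting $Q_w$ and $Q_{w'}$ to the (compact) unit circle, on which both are continuous and strictly positive, yields constants $0<c_1\le c_2<\infty$, depending only on $w$ and $w'$, with $c_1Q_{w'}(v)\le Q_w(v)\le c_2Q_{w'}(v)$ for all $v\in\R^2$ by homogeneity. Evaluating at $v=(B(x),D(x))$, which is never the zero vector by \eqref{eq:det}, gives the pointwise estimate
$$
c_1\,h_{w'}(x)\le h_w(x)\le c_2\,h_{w'}(x),\qquad x\in\R .
$$

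Taking infima over $x$ then shows that $\inf_x h_w(x)$ and $\inf_x h_{w'}(x)$ are simultaneously zero or simultaneously positive; since $w,w'$ range over all of $\H$, $f_{t+i\gamma}$ is bounded for one parameter in $\H$ exactly when it is bounded for all of them, and the same dichotomy governs unboundedness. I do not expect a real obstacle here: the only points needing a little care are that $Q_w$ is genuinely positive definite (which fails at $\gamma=0$, but $w\in\H$) and that $(B(x),D(x))$ never vanishes, so that the comparison of the forms is effective at every $x$. An essentially equivalent one-variable route, should one prefer it, is to note that where $B(x)\ne0$ one has $h_w(x)/h_{w'}(x)=|w-s(x)|^2/|w'-s(x)|^2$ with $s(x)=D(x)/B(x)\in\R$, and that $s\mapsto|w-s|^2/|w'-s|^2$ is continuous and positive on $\R$ with limit $1$ at $\pm\infty$, hence bounded above and below by positive constants, while the (isolated) points with $B(x)=0$ — where $D(x)\ne0$ — just contribute the ratio $1$.
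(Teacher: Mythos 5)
Your proof is correct, and it rests on the same underlying fact as the paper's, but packages it differently. The paper argues sequentially in one direction only: if $f_{t+i\gamma}$ is unbounded, pick $x_n$ with $(tB(x_n)-D(x_n))^2+\gamma^2B(x_n)^2\to 0$, observe that this forces $B(x_n)\to 0$ and $D(x_n)\to 0$ (this is exactly the positive definiteness of your form $Q_w$, used qualitatively), and conclude that $(sB(x_n)-D(x_n))^2+\rho^2B(x_n)^2\to 0$ for every other parameter, so all the other densities are unbounded too; the bounded case is then the contrapositive. You instead prove a uniform two-sided comparison $c_1h_{w'}(x)\le h_w(x)\le c_2h_{w'}(x)$ by comparing the two positive definite quadratic forms on the unit circle and using homogeneity, which is a slightly heavier setup but yields strictly more: the densities $f_w$ and $f_{w'}$ are pointwise comparable with constants depending only on $w,w'$, from which the bounded/unbounded dichotomy is immediate in both directions at once. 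Two small remarks: the non-vanishing of $(B(x),D(x))$, which you take care to note, is not actually needed for the form comparison itself (both sides vanish at the zero vector), only for the strict positivity of $h_w$ already used in Theorem~\ref{thm:main}; and your one-variable variant via $s(x)=D(x)/B(x)$ and the boundedness of $s\mapsto |w-s|^2/|w'-s|^2$ on $\R$ is an equally valid, essentially equivalent way to get the same two-sided estimate.
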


\begin{proof} Assuming that $f_{t+i\gamma}$ is unbounded, there exists a sequence $(x_n)$ of real numbers such that $(tB(x_n)-D(x_n))^2+\gamma^2 B(x_n)^2\to 0$.
(The sequence $(x_n)$ is necessarily unbounded). However, this implies that $B(x_n)$ as well as $D(x_n)$ tend to 0, and then for all $s\in\R, \rho>0$
$$
(sB(x_n)-D(x_n))^2+\rho^2 B(x_n)^2\to 0,
$$
showing that the density $f_{s+i\rho}$ is unbounded.
\end{proof}

\section{The Al-Salam--Carlitz  moment problem}

This moment problem depends on two parameters $0<q<1$ and $a>0$ and is treated in \cite{B:V}. To describe it we recall the $q$-factorial notation  used. For a complex number $z$ we define
\begin{equation}
(z;q)_n:=\prod_{k=1}^n(1-zq^{k-1}), \quad n=0,1,\ldots,\infty,
\end{equation}
where $(z;q)_0=1$ as an empty product, and the infinite product $(z;q)_\infty$ is convergent because $q<1$. This is the standard notation used in \cite{G:R}. Since $q$  will be the same fixed number in this section, we have followed the notational simplification used in \cite{B:V}, namely $[z]_n:=(z;q)_n$. We restrict attention to the parameter values 
$0<q<1<a<1/q$, in which case the moment problem is indeterminate. It is in fact even indeterminate as a Stieltjes problem in the sense that there exist several measures with the same moments and supported on the half-line $[0,\infty)$, see \cite[p. 196]{B:V}.
The quantity $\alpha$ from \cite[formula (2.25)]{B:V}
$$
\alpha=\lim_{n\to\infty}\frac{p_n(0)}{q_n(0)}=\lim_{x\to-\infty}\frac{D(x)}{B(x)}
$$
is given by 
$$
\alpha=-\left(\sum_{n=0}^\infty \frac{[q]_n}{a^{n+1}}\right)^{-1}<0,
$$
cf. \cite[p. 200]{B:V}.

 We mention two discrete solutions, see \cite[Proposition 4.5.1]{B:V}:
\begin{equation}\label{eq:K}
\mu_K=[aq]_\infty\sum_{n=0}^\infty \frac{a^nq^{n^2}}{[aq]_n[q]_n}\delta_{(q^{-n}-1)},
\end{equation}

\begin{equation}\label{eq:F}
\mu_F=[q/a]_\infty\sum_{n=0}^\infty \frac{a^{-n}q^{n^2}}{[q/a]_n[q]_n}\delta_{(aq^{-n}-1)}.
\end{equation}
In these formulas we use the notation $\delta_p$ for the degenerate probability measure with mass 1 at the point $p$. We have used the notation $\mu_K,\mu_F$ for these measures  since they have been identified with the Krein and Friedrichs solutions to an indeterminate Stieltjes moment problem, see \cite{Pe}, \cite{B} and \cite[p. 178]{Sch}.
 Among the densities \eqref{eq:big} the following one-parameter family was found in the Al-Salam--Carlitz case, see \cite[Proposition 4.6.1]{B:V}:

\begin{equation}\label{eq:ACcont}
\nu_\rho(q,a)(x)=c(a)\frac{\rho}{[(1+x)/a]_\infty^2+\rho^2 [1+x]_\infty^2},\quad x\in\R, \rho>0,
\end{equation}   
where
\begin{equation}\label{eq:c(a)}
c(a)=\frac{a-1}{\pi a}[q]_\infty [aq]_\infty [q/a]_\infty.
\end{equation}
The densities $\nu_{\rho}(q,a)$ are related to the family \eqref{eq:big} in the following way:

For $t\in (\alpha,0)$ let $\gamma(t):=\sqrt{-t(t-\alpha)}$. Then $t+i\gamma(t)$ parametrizes the half-circle in $\H$ with diameter $[\alpha,0]$. Furthermore, $f_{t+i\gamma(t)}=\nu_{\rho}(q,a)$,
where
$$
\rho=\rho(t)=\frac{\gamma(t)}{t^2+\gamma^2(t)}\frac{(a-1)[q/a]_\infty}{a [q]_\infty [aq]_\infty},
$$
and $\rho$ is a bijection of $(\alpha,0)$ onto $(0,\infty)$.
The common moments of these measures are given by a complicated formula originally found by Al-Salam and Carlitz in 1965, see \cite[Section 4.9]{B:V}.

\begin{rem}{\rm If the parameters satisfy $0<q<a\le 1$ we still have an indeterminate Hamburger moment problem, but it is determinate as a Stieltjes problem. The measure $\mu_K$ in \eqref{eq:K} is the only solution to the Stieltjes problem and the measure $\mu_F$ in \eqref{eq:F} is a solution to the Hamburger moment problem with a negative mass point at $a-1$ when $q<a<1$.  The densities given in \eqref{eq:ACcont} are still solutions to the Hamburger problem when $q<a<1$, but 
the factor $a-1$ in \eqref{eq:c(a)} shall be replaced by $1-a$. See \cite{B:V} for details. The case  $a=1$  is also treated there. Note that $\mu_F=\mu_K$ when $a=1$.}
\end{rem}

All the densities $f_{t+i\gamma}$ are bounded as a consequence of Proposition~\ref{thm:oneall} and the following result:

\begin{thm}\label{thm:bd} Assume $1<a<1/q$. The function $\varphi(x):=[x]_\infty^2+[x/a]_\infty^2$ is bounded below on $\R$ by a positive constant $LB(a,q)$  given in \eqref{eq:LB}.
\end{thm}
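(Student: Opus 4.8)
The plan is to partition $\R$ into the four ranges $(-\infty,0]$, $[0,a]$, $[a,q^{-1}]$ and $[q^{-1},\infty)$, to produce an explicit positive lower bound for $\varphi$ on each, and to let $LB(a,q)$ be the smallest of the four. Throughout one uses the functional equation $[z]_\infty=(1-z)[qz]_\infty$ and the elementary fact that $y\mapsto[y]_\infty=(y;q)_\infty$ is strictly decreasing on $[0,1)$, being a product of positive strictly decreasing factors. On $(-\infty,0]$ every factor of $[x]_\infty$ and of $[x/a]_\infty$ is $\ge 1$, so $\varphi\ge 2$ there, and nothing further is needed.

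On $[0,a]$ I would write $[x]_\infty=(1-x)\prod_{k\ge1}(1-q^kx)$ and $[x/a]_\infty=(1-x/a)\prod_{k\ge1}(1-q^kx/a)$. For $x\in[0,a]$ and $k\ge1$ one has $0\le q^kx\le q^ka<1$ and $0\le q^kx/a<1$, so the two remaining products are positive and, by monotonicity (comparing with their values at $x=a$), bounded below by $[qa]_\infty$ and $[q]_\infty$ respectively. Hence $\varphi(x)\ge (1-x)^2[qa]_\infty^2+\big((a-x)/a\big)^2[q]_\infty^2$, and the quadratic on the right has its minimiser in $[1,a]$, with minimal value an explicit function of $a,[q]_\infty,[qa]_\infty$. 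The interval $[a,q^{-1}]$ is handled identically, now factoring $(1-x)(1-qx)$ out of $[x]_\infty$ and $(1-x/a)$ out of $[x/a]_\infty$; the leftover products are again bounded below by $q$-shifted factorials, $|1-qx|=q(q^{-1}-x)$, and one is reduced to minimising $q^2[q]_\infty^2(q^{-1}-x)^2+a^{-4}[q/a]_\infty^2(x-a)^2$ over $[a,q^{-1}]$, which is again an explicit constant.

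The range $[q^{-1},\infty)$ is the crux, since one must control all the periods $[q^{-n},q^{-(n+1)})$, $n\ge1$, uniformly. Writing $x=q^{-n}v$ with $v\in[1,q^{-1})$, I would use the elementary identity $[q^{-n}w]_\infty=(-1)^n q^{-n(n+1)/2}\,w^n(q/w;q)_n\,[w]_\infty$, obtained by reindexing the infinite product, applied with $w=v$ and $w=v/a$; since $v>q$ and $v/a\ge 1/a>q$, all the factors $w-q^j$ are positive. Because $q/v\le q$ and $qa/v\le qa<1$, monotonicity gives $(q/v;q)_n\ge[q]_\infty$ and $(qa/v;q)_n\ge[qa]_\infty$, while $v\ge1$ gives $v^n\ge1$ and $(v/a)^n\ge a^{-n}$. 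Combining this with the linear lower bounds $|[v]_\infty|\ge q[q]_\infty(v-1)(q^{-1}-v)$ and $|[v/a]_\infty|\ge a^{-2}(a-1)[q/a]_\infty\,|v-a|$ on $[1,q^{-1}]$ (the same factoring as above) yields
$$\varphi(q^{-n}v)\ge q^{-n(n+1)}\Big(q^2[q]_\infty^4(v-1)^2(q^{-1}-v)^2+a^{-2n}a^{-4}(a-1)^2[q/a]_\infty^2[qa]_\infty^2(v-a)^2\Big).$$

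Here the hypothesis $a<1/q$ is exactly what makes $n\mapsto q^{-n(n+1)}a^{-2n}$ nondecreasing: its successive logarithmic increments equal $2\big((n+1)\log(1/q)-\log a\big)>0$, so $q^{-n(n+1)}\ge1$ and $q^{-n(n+1)}a^{-2n}\ge1$ for all $n\ge1$. Dropping these two factors leaves a fixed continuous function of $v$ on the compact interval $[1,q^{-1}]$ whose two nonnegative summands vanish only on the disjoint sets $\{1,q^{-1}\}$ and $\{a\}$, hence is bounded below by a positive constant; an explicit such constant is obtained by splitting according to whether $|v-a|\ge\tfrac12\min(a-1,q^{-1}-a)$ or not. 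The main obstacle is precisely securing this last bound uniformly in $n$ — i.e. showing the growth of $q^{-n(n+1)}$ absorbs the $a^{-2n}$ loss — after which everything reduces to minimising quadratics and one sets $LB(a,q)$ to the minimum of the four explicit constants.
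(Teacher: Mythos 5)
Your proof is correct in substance, and it reaches the theorem by a route that is organized differently from the paper's while running on the same underlying mechanism. The key computations check out: the reindexing identity $[q^{-n}w]_\infty=(-1)^nq^{-n(n+1)/2}w^n(q/w;q)_n[w]_\infty$, the monotonicity bounds $(q/v;q)_n\ge[q]_\infty$ and $(qa/v;q)_n\ge[aq]_\infty$, the linear lower bounds for $|[v]_\infty|$ and $|[v/a]_\infty|$ on $[1,q^{-1}]$, and the observation that $q^{-n(n+1)}a^{-2n}$ is nondecreasing (this is exactly where $1<a<1/q$ enters), which lets you discard the $n$-dependence and reduce to a fixed continuous function of $v$ on a compact interval whose two nonnegative summands have disjoint zero sets. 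The paper instead chooses auxiliary constants $\alpha,\beta$ with $1<\alpha<a<\beta<1/q$ and covers $\R$ by the left ray $(-\infty,q\beta]$ together with the two interlaced families of intervals $[\alpha q^{-n},\beta q^{-n}]$ and $[\beta q^{1-n},\alpha q^{-n}]$; on each such interval only one of the two terms $[x]_\infty^2$, $[x/a]_\infty^2$ is estimated, namely the one whose zeros have been excluded by the choice of $\alpha,\beta$, and the scale dependence appears as explicitly increasing sequences $K_n$, $L_n$, so no minimization of a sum of two quadratics is needed and the bound $\min\{K_0,L_0,[q\beta]_\infty^2\}$ comes out directly. Your version trades the choice of $\alpha,\beta$ for keeping both terms on each full period $[q^{-n},q^{-n-1})$ plus a short two-quadratic positivity argument; both yield explicit constants, and in both the hypothesis $a<1/q$ is what makes the super-exponential factor absorb the geometric loss (in the paper it is what permits the interlacing choice of $\alpha,\beta$ in the first place).

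Two minor slips should be repaired, neither affecting validity. On $[a,q^{-1}]$ your first quadratic must carry the factor $(x-1)^2$ (or its lower bound $(a-1)^2$) coming from the factor $1-x$ of $[x]_\infty$: as written, $q^2[q]_\infty^2(q^{-1}-x)^2$ is not by itself a lower bound for $[x]_\infty^2$ when $a<x<2$. Likewise the second summand there should be $a^{-4}(a-1)^2[q/a]_\infty^2(x-a)^2$, since the relevant tail product is bounded below by $(1/a;q)_\infty=(1-1/a)[q/a]_\infty$ rather than by $[q/a]_\infty$ — exactly the constant you do use correctly in the range $[q^{-1},\infty)$. With these constants corrected the argument is complete and gives an explicit positive lower bound as required.
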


\begin{proof} First notice that for $x\le \xi<1$ we have $[x]_\infty \ge [\xi]_\infty>0$, and hence
\begin{equation}\label{eq:1}
\varphi(x)\ge [\xi]_\infty^2,\quad x\le \xi<1.
\end{equation}
 We choose constants $\alpha,\beta$ such that
\begin{equation}
1<\alpha<a<\beta<1/q.
\end{equation} 
We first estimate $[x]_\infty^2$ from below when $x$ belongs to the interval $[\alpha q^{-n},\beta q^{-n}]$ for fixed $n=0,1,\ldots.$

To obtain the estimates we use that the parabola $(1-cx)^2$ has minimum at $x=1/c$, so the minimum of the parabola over an interval $[l,r]$ is achieved at the right endpoint $r$ of the interval when $r<1/c$ and at the left endpoint $l$ when $1/c<l$.
 
For $x\in[\alpha q^{-n}, \beta q^{-n}]$ we find
\begin{eqnarray*}
[x]_\infty^2&=&\left(\prod_{k=0}^{n-1}(1-xq^k)^2\right) (1-xq^n)^2\prod_{k=n+1}^\infty (1-xq^k)
^2\\
&\ge& \left(\prod_{k=0}^{n-1}(1-\alpha q^{k-n})^2\right)(1-\alpha)^2\prod_{k=n+1}^\infty(1-\beta q^{k-n})^2\\
&=& (\alpha-1)^2\left(\prod_{j=1}^n\left(\frac{\alpha}{q^j}\right)^2(1-q^j/\alpha)^2\right)\prod_{j=1}^\infty(1-\beta q^j)^2\\
&=& (\alpha-1)^2\alpha^{2n}q^{-n(n+1)}[q/\alpha]_n^2\,[\beta q]_\infty^2\\
&>&(\alpha-1)^2\alpha^{2n}q^{-n(n+1)}[q/\alpha]_\infty^2\,[\beta q]_\infty^2=:K_n.
\end{eqnarray*}
Note that $K_n$ is increasing in $n$ so $K_n\ge K_0$ and hence
\begin{equation}\label{eq:2}
\varphi(x)\ge K_0,\quad x\in\bigcup_{n=0}^\infty [\alpha q^{-n},\beta q^{-n}].
\end{equation} 

We next estimate $[x/a]_\infty^2$ from below for $x\in [\beta q^{-n+1},\alpha q^{-n}]$
for fixed $n=0,1,\ldots.$

We find
\begin{eqnarray*}
[x/a]_\infty^2&=&\left(\prod_{k=0}^{n-1}(1-(x/a)q^k)^2\right)(1-(x/a)q^n)^2\prod_{k=n+1}^\infty (1-(x/a)q^k)^2\\
&\ge&\left(\prod_{k=0}^{n-1}(1-(q\beta/a)q^{k-n})^2\right)(1-\alpha/a)^2\prod_{k=n+1}^\infty (1-(\alpha/a)q^{k-n})^2\\
&=& (1-\alpha/a)^2\left(\prod_{j=0}^{n-1} \left(\frac{\beta}{aq^j}\right)^{2}(1-(a/\beta)q^j)^2\right)\prod_{j=1}^\infty(1-(\alpha/a)q^j)^2\\
&=&(1-\alpha/a)^2(\beta/a)^{2n}q^{-n(n-1)}[a/\beta]_n^2\,[q\alpha/a]_\infty^2\\
&>&(1-\alpha/a)^2(\beta/a)^{2n}q^{-n(n-1)}[a/\beta]_\infty^2\,[q\alpha/a]_\infty^2=:L_n.
\end{eqnarray*}
Note that $L_n$ is increasing in $n$ so $L_n\ge L_0$ and hence
\begin{equation}\label{eq:3}
\varphi(x)\ge L_0,\quad x\in\bigcup_{n=0}^\infty [\beta q^{1-n},\alpha q^{-n}].
\end{equation}
From \eqref{eq:1} we have $\varphi(x)\ge [q\beta]_\infty^2$ for $x\le q\beta$.
Combining this with \eqref{eq:2} and \eqref{eq:3}, we see that $\varphi(x)$ is bounded below on $\R$ by the constant
$$
\varphi(x)\ge \min\{K_0,L_0,[q\beta]_\infty^2\}.
$$

To get  a constant which does not depend on the chosen values $\alpha,\beta$ we choose
$\alpha:=\sqrt{a}, \beta:=\sqrt{a/q}$ and get the following lower bound for $\varphi(x)$:
$$
\min\{(\sqrt{a}-1)^2[q/\sqrt{a}]_\infty^2[\sqrt{aq}]_\infty^2, (1/\sqrt{a}-1)^2 [q/\sqrt{a}]_\infty^2[\sqrt{aq}]_\infty^2, [\sqrt{aq}]_\infty^2\},
$$
which can be simplified to
\begin{equation}\label{eq:LB}
LB(a,q):=[\sqrt{aq}]_\infty^2[q/\sqrt{a}]_\infty^2\min\{(1/\sqrt{a}-1)^2, 1/[q/\sqrt{a}]_\infty^2\}.
\end{equation}
\end{proof}
 
\begin{rem}{\rm The estimates above also show that $\varphi(x)$ tends to infinity faster than any power of  $|x|$ when $|x|\to\infty$. Note that 
$$
\varphi(ax)=[x]_\infty^2+[x/(1/a)]_\infty^2\ge LB(a,q)
$$
and $q<1/a<1$, so the densities given in \eqref{eq:ACcont} are also bounded  in the case $q<a<1$.
}
\end{rem} 

\medskip 
We include a Maple plot of the density $\nu_1(q,a)(x)$ with the values $q=0.6, a=1.2$ as 
well as a list of the entropy for some values of $\rho$ calculated with Maple.

\medskip 
\begin{center}
\begin{tabular}{|l|l|} \hline
$\rho$ & $H[\nu_\rho(q,a)]$\\ \hline
0.01 & -2.1184$\ldots$\\ \hline
0.2 & 0.5216$\ldots$\\ \hline
0.5 & 0.9714$\ldots$\\ \hline
1 & 1.0617$\ldots$\\ \hline
2 & 0.9100$\ldots$\\ \hline
5 & 0.4000$\ldots$\\ \hline
10 & -0.1393$\ldots$\\ \hline
\end{tabular}
\end{center}

\begin{center}
\includegraphics[scale=0.4]{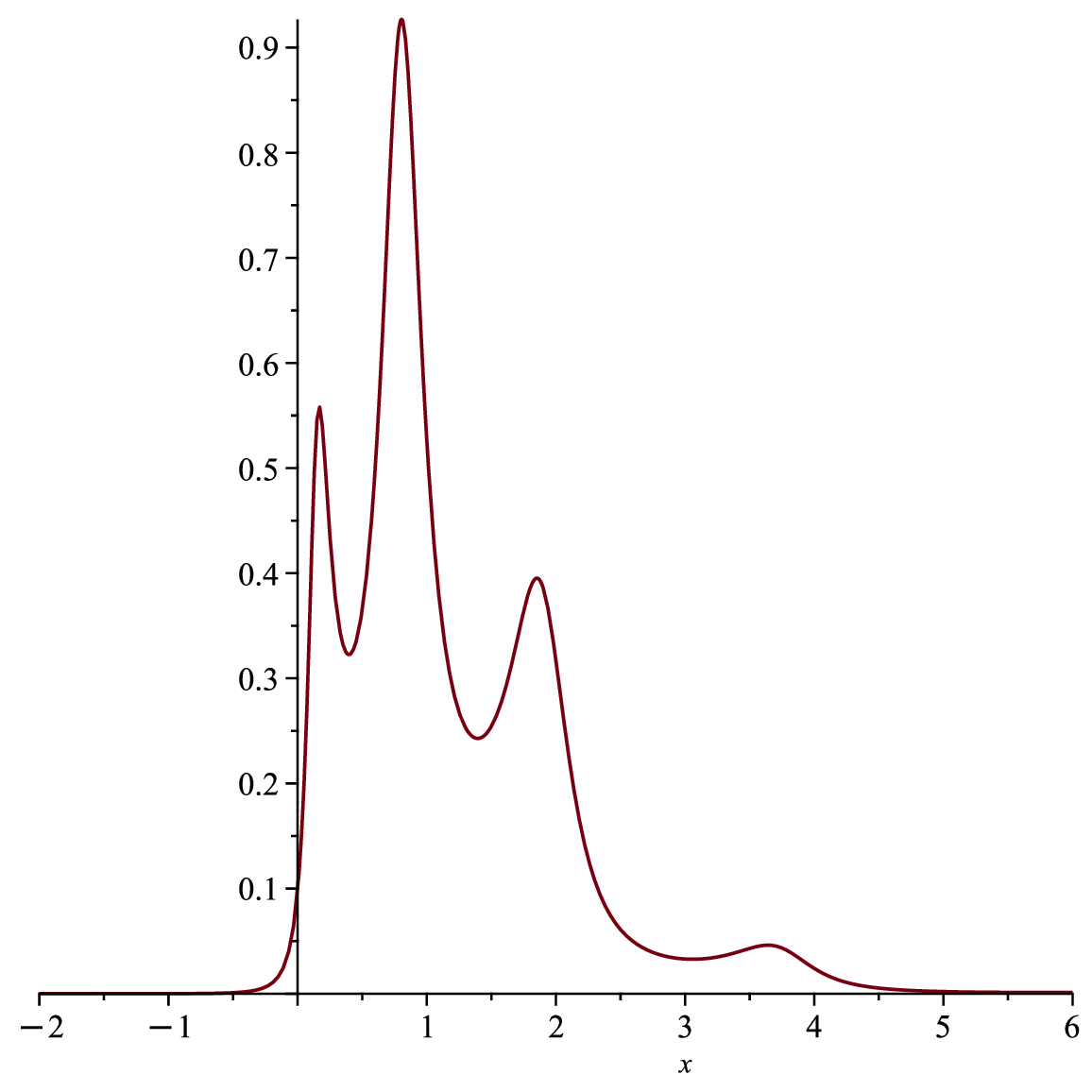}
\end{center}

{\bf Acknowledgment:} The author wishes to thank Henrik Laurberg Pedersen and Ryszard Szwarc for valuable comments during the preparation of the manuscript.

\noindent
Christian Berg\\
Department of Mathematical Sciences, University of Copenhagen\\
Universitetsparken 5, DK-2100 Copenhagen, Denmark\\
e-mail: {\tt{berg@math.ku.dk}}


\begin{thebibliography}{12}
\bibitem{Ak}  N.~I. Akhiezer,  {\it The Classical Moment Problem and Some Related
 Questions in Analysis}. English translation, Oliver and Boyd, Edinburgh,
 1965.

\bibitem{B:C} C.~Berg and J.~P.~R.~Christensen,   Density questions in the classical theory of moments,  Ann. Inst. Fourier  {\bf 31}, no. 3 (1981), 99--114.

\bibitem{B:V} C.~Berg, G.~Valent, The Nevanlinna parametrization for some indeterminate Stieltjes moment problems associated with birth and death processes.
Methods and Applications of Analysis {\bf 1} (2) (1994), 169--209.

\bibitem{B} C.~Berg, Indeterminate Stieltjes moment problems revisited, Arab. J. Math. https://doi.org/10.1007/s40065-025-00554-8.

\bibitem{G:R} G.~Gasper, M.~Rahman, {\it Basic Hypergeometric Series}, Cambridge University Press, Cambridge, 1990.

\bibitem{K:K} J.~N.~Kapur, H.~K.~Kesavan, {\it Entropy Optimization Principles with Applications}, Academic Press, Inc. 1992.


\bibitem{M:T} M.~Milev, A.~Tagliani, Entropy convergence of finite moment approximations in Hamburger and Stieltjes problems, Statistics and Probability Letters {\bf 120} (2017), 114-117.

\bibitem{N:T1} P.~L.~Novi Inverardi, A.~Tagliani  and M.~Milev, Indeterminate Hamburger moment problem: Entropy convergence. Statistics and Probability Letters {\bf 212} (2024) 110155. 

\bibitem{N:T2} P.~L.~Novi Inverardi, A.~Tagliani, Indeterminate Stieltjes Moment Problem: Entropy Convergence, Symmetry {\bf 2024}, 16, 313.  

\bibitem{N:T3} P.~L.~Novi Inverardi, A.~Tagliani, The Lognormal Distribution Is Characterized  by Its Integer Moments, Mathematics {\bf 2024}, 12, 3830. 

\bibitem{N:T4} P.~L.~Novi Inverardi, A.~Tagliani, An analogue Fr{\'e}chet- Shohat moments
convergence theorem for indeterminate moment problems. Electron. Commun. Probab. {\bf 30} (2025), article no. 61, 1-10.

\bibitem{Pe} H.~L.~Pedersen, Stieltjes Moment Problems and the Friedrichs Extension of a Positive Definite Operator, J. Approx. Theory {\bf 83} (1995), 289--307.


\bibitem{Sch} K.~Schm\"{u}dgen, {\it The Moment Problem}, Graduate Texts in Mathematics Vol. 277. Springer International Publishing AG, 2017. 

\bibitem{S:T:N} J.~M.~Stoyanov, A.~Tagliani and P.~L.~Novi Inverardi, Maximum Entropy Criterion for Moment Indeterminacy of Probability Densities. Entropy {\bf 2024}, 26, 121.
  
\end{thebibliography}
\end{document}